\numberwithin{equation}{section}
\newtheorem{definition}{Definition}[section]
\newtheorem{theorem}{Theorem}[section]
\numberwithin{figure}{section}
\numberwithin{table}{section}
\newcommand\btd{\raise 2pt \hbox{$\hat\bigtriangledown$}\hskip 1.5pt}
\newcommand\bt{\raise 2pt \hbox{$\bigtriangledown$}\hskip 1.5pt}
\begin{document}
\date{}
\title{Existence and stability of  solitary waves to  the rotation-Camassa-Holm  equation }
\author{Hao Tong,~~~Shaojie Yang\thanks{Corresponding author:
 th06101112@163.com (Hao Tong);   ~~shaojieyang@kust.edu.cn (Shaojie Yang)  } \\~\\
\small ~ Department of  Mathematics, ~Kunming University of Science and Technology,  \\
\small ~Kunming, Yunnan 650500, China}

\date{}
\maketitle
\begin{abstract}

In this paper, we  investigate existence and stability of  solitary waves to  the rotation-Camassa-Holm  equation
which can be considered as a model in the shallow water for the long-crested waves propagating near the equator with effect of the Coriolis force due to the Earth’s rotation.  We prove existence of solitary  waves  by performing a phase plane analysis.  Moreover, utilizing the approach proposed by Grillakis-Shatah-Strauss,   we prove stability of solitary waves.  \\

\noindent\emph{Keywords}: Solitary waves;  Stability;  The rotation-Camassa-Holm  equation\\

 \noindent\emph{Mathematics Subject Classification}: 35Q35; 35Q51
 \end{abstract}
\noindent\rule{15.5cm}{0.5pt}

\section{Introduction}\label{sec1}
The rotation-Camassa-Holm (R-CH) equation
\begin{equation}\label{eq.10}
\begin{split}
\begin{cases}
m_t+um_x+2u_xm+cu_x-\frac{\beta_0}{\beta}u_{xxx}+\frac{w_1}{\alpha^2}u^2u_x+\frac{w_2}{\alpha^3}u^3u_x=0,~x\in \mathbb{R},~t>0,\\
m=u-u_{xx},~x\in \mathbb{R},~t>0,\\
\end{cases}
\end{split}
\end{equation}
was implied in \cite{r20,r1,r2} as a model equation which describes the motion of the fluid with the Coriolis effect from the incompressible shallow water in the equatorial region. It is known that the ocean dynamics near the Equator is quite different from that in non-equatorial regions since the meridional component of the Coriolis force (an effect of the Earth's rotation) vanishes at the Equator, so that the Equator acts as a wave guide, facilitating azimuthal wave propagation \cite{r6,r7}.
The constants in the R-CH equation \eqref{eq.10} are defined by
\begin{align*}
&c=\sqrt{1+\Omega^2}-\Omega, \alpha=\frac{c^2}{1+c^2}, \beta_0=\frac{c(c^4+6c^2-1)}{6(1+c^2)}, \beta=\frac{3c^4+8c^2-1}{6(1+c^2)},\\
&w_1=\frac{-3c(c^2-1)(c^2-2)}{2(1+c^2)^3}, w_2=\frac{(c^2-2)(c^2-1)^2(8c^2-1)}{2(1+c^2)^5}
\end{align*}
with the parameter $\Omega$ as the constant Coriolis frequency caused by the Earth's rotation. The R-CH equation \eqref{eq.10} has the nonlocal cubic and even quartic nonlinearities and a formal Hamiltonian structure, and  has corresponding  conserved quantities as follows
\begin{align*}
I(u)=&\int_{\mathbb{R}}u dx,\\
E(u)=&\frac{1}{2} \int_{\mathbb{R}} (u^2+u_x^2)dx,\\
F(u)=&\frac{1}{2} \int_{\mathbb{R}} \left(cu^2+u^3+\frac{\beta_0}{\beta}u_x^2+\frac{w_1}{6\alpha^2}u^4+\frac{w_2}{10\alpha^3}u^5+uu_x^2\right)dx.
\end{align*}
Recently, global existence and uniqueness of the energy conservative weak solutions in the energy space $H^1$ to the R-CH equation have been derived in Ref.\cite{r3}.  Well-posedness, travelling waves and geometrical aspects  have been studied in Ref.\cite{r34}.
Non-uniform dependence and well-posedness in the sense of Hadamard have been proved in Ref.\cite{r36}.
Generic regularity of conservative solutions have been studied in Ref.\cite{r17}.  Wang-Yang-Han \cite{R35} proved that symmetric waves to the R-CH equation must be traveling waves.

In the case that the Coriolis effect vanishes (i.e., $\Omega=0$), then it's easy to observe that the coefficients in the higher-power nonlinearities $w_1=w_2=0$.  When $\Omega=0$,  the R-CH Eq.\eqref{eq.10} is reduced the remarkable Camassa-Holm  (CH) equation
\begin{align}\label{a6}
m_t+um_x+2u_xm=0,~~m=u-u_{xx},
\end{align}
which was first derived formally by Fokas and Fuchssteiner in \cite{R10},  and later derived as a model for unidirectional propagation of shallow water over a flat bottom by Camassa and Holm in \cite{R9}.
The CH equation has been studied extensively in the last twenty years because of its many remarkable properties:
infinity of conservation laws and completely integrability \cite{R10,R9}, peakons \cite{R9}, well-posedness \cite{r26,r27,r28}, wave breaking \cite{r29,R30,r31,r32}, orbital stability \cite{r21,r23},  global conservative solutions and dissipative solutions \cite{r24,r25}.

In the present paper, we are concerned with the existence and stability of  solitary waves to  the R-CH equation \eqref{eq.10}.   The study of the existence of solitary waves  to the R-CH equation \eqref{eq.10} by using dynamical systems method which have been used in studying solitary traveling water waves of moderate amplitude \cite{R24}. Moreover, we are interested in the spectral properties of solitary waves and that their shapes are stable under small disturbances, cf. Definition \ref{D1} below.  The  approach of studying stability  is inspired by  Constantin-Strauss \cite{r21}, which shows that solitary waves of the CH equation \eqref{a6} are solitons and that they are orbitally stable. In a similar way, we take advantage of an approach proposed by Grillakis-Shatah-Strauss \cite{r30} and prove the convexity of a scalar function, which is based on two conserved quantities, to deduce orbital stability of solitary waves.

The rest of this paper is organized as follows. In Section \ref{sec2}, we discuss the existence of solitary waves to the R-CH equation. In Section \ref{sec3}, we prove that solitary waves of the R-CH equation are orbitally stable.

\section{Existence of solitary  waves}\label{sec2}
In this section, we  discuss the existence of solitary waves to the R-CH equation
by performing a phase plane analysis.

 For a traveling wave solution $u(t,x)=\varphi(x-\sigma t)$ with speed $\sigma$  satisfies the following equation\\
\begin{equation}\label{eq.y}
-\sigma\varphi_x+\sigma\varphi_{xxx}+c\varphi_x+3\varphi\varphi_x-\frac{\beta_0}{\beta}\varphi_{xxx}+\frac{w_1}{\alpha^2}\varphi^2\varphi_x+\frac{w_2}{\alpha^3}\varphi^3\varphi_x=2\varphi_x\varphi_{xx}+\varphi\varphi_{xxx}.
\end{equation}
Among all the traveling wave solutions of  the R-CH equation \eqref{eq.10}, we shall focus on solutions which have the additional property that the waves are localized and that $\varphi$ and its derivatives decay at infinity, that is,
\begin{equation*}
\varphi^{(n)}(x) \rightarrow 0~~~~\text{as~$|x|$} \rightarrow \infty,~~~\text{for}~~ n\in \mathbb{N}.
\end{equation*}
Using the decay of $\varphi$ at infinity, integrating \eqref{eq.y} respect to spatial variable $x$ , one has\\
\begin{equation}\label{eq.20}
(c-\sigma)\varphi+\frac{3}{2}\varphi^2+\frac{w_1}{3\alpha^2}\varphi^3+\frac{w_2}{4\alpha^3}\varphi^4+\left(\sigma-\frac{\beta_0}{\beta}\right)\varphi_{xx}-\frac{1}{2}(\varphi_x)^2-\varphi\varphi_{xx}=0,
\end{equation}
then we can    rewrite \eqref{eq.20} as the following planar autonomous system
\begin{equation}\label{eq.2}
\begin{cases}
\varphi'=\zeta,\\
~~~\\
\zeta'=\dfrac{(c-\sigma)\varphi+\frac{3}{2}\varphi^2+\frac{w_1}{3\alpha^2}\varphi^3+\frac{w_2}{4\alpha^3}\varphi^4-\frac{1}{2}\zeta^2}{\varphi-\sigma+\frac{\beta_0}{\beta}}.
\end{cases}
\end{equation}
Our goal is to determine a homoclinic orbit in the phase plane starting and ending in (0,0) which corresponds to a solitary  wave solution of the R-CH equation \eqref{eq.10}. The existence of such an orbit depends on the wave speed.$\sigma$,  We start our analysis by determining the critical points of  system \eqref{eq.2}, that is, points where $(\varphi',\zeta')=(0,0)$. After
linearizing the system in the vicinity of those points to determine the local behavior, we
prove existence of a homoclinic orbit by analyzing the phase plane.  The main result for existence of solitary waves is as follows.
\begin{theorem}
If
\begin{align*}
\frac{\sqrt{2}}{4}<c<\sqrt{2}, c\neq1 ~and~ \sigma<c ,
\end{align*}
or
\begin{align*}
c>\sqrt{2} ~and~ \sigma>c,
\end{align*}
 then the R-CH equation \eqref{eq.10} exists solitary  wave solutions.
\end{theorem}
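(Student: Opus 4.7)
The plan is to carry out a complete phase-plane analysis of system \eqref{eq.2} and exhibit a homoclinic orbit at the origin, which in view of the prescribed decay of $\varphi^{(n)}$ at infinity corresponds exactly to a solitary-wave profile for \eqref{eq.10}.

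First I would identify and classify the equilibria of \eqref{eq.2}. The origin $(0,0)$ is always an equilibrium, and the Jacobian of the vector field there is
\[
J(0,0)=\begin{pmatrix} 0 & 1\\[2pt] \dfrac{c-\sigma}{\beta_0/\beta-\sigma} & 0 \end{pmatrix},
\]
so $(0,0)$ is a hyperbolic saddle if and only if $(c-\sigma)(\beta_0/\beta-\sigma)>0$. A direct inspection of the rational expression for $\beta_0/\beta$ as a function of $c$ shows that in both parameter regimes of the theorem this sign condition is satisfied; hence $(0,0)$ is a saddle and a homoclinic loop is at least not excluded by local information.

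Next I would construct an explicit first integral. Writing $y(\varphi)=\zeta^2/2$ along a non-constant orbit, the second equation in \eqref{eq.2} turns into the linear ODE
\[
\frac{d}{d\varphi}\Bigl[\bigl(\varphi-\sigma+\tfrac{\beta_0}{\beta}\bigr)\,y\Bigr]=(c-\sigma)\varphi+\tfrac{3}{2}\varphi^2+\tfrac{w_1}{3\alpha^2}\varphi^3+\tfrac{w_2}{4\alpha^3}\varphi^4.
\]
Integrating from $0$ and imposing $y(0)=0$ (forced by the decay of $\varphi$ and $\zeta=\varphi_x$) yields the closed-form relation
\[
\zeta^2=\frac{\varphi^{2}\,Q(\varphi)}{\varphi-\sigma+\beta_0/\beta},\qquad Q(\varphi)=(c-\sigma)+\varphi+\tfrac{w_1}{6\alpha^2}\varphi^{2}+\tfrac{w_2}{10\alpha^3}\varphi^{3}.
\]
Any homoclinic orbit must then be a connected piece of the zero level set of $\zeta^{2}-\varphi^{2}Q(\varphi)/(\varphi-\sigma+\beta_0/\beta)$. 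With this in hand the orbit is located as follows: since $\zeta^2\geq0$, the orbit is confined to an interval on which $Q(\varphi)$ and $\varphi-\sigma+\beta_0/\beta$ share a sign; it starts at $\varphi=0$, returns to $\varphi=0$, and thus terminates at the first non-zero root $\varphi_\ast$ of $Q$, while avoiding the singular line $\varphi=\sigma-\beta_0/\beta$. In each regime I would check that $Q(0)=c-\sigma$ has the same sign as the denominator at $\varphi=0$, and then use the explicit factorisations of $w_1$ and $w_2$ to produce a root $\varphi_\ast$ of $Q$ on the correct side of the origin and strictly inside the singularity. Together with the saddle behaviour and standard stable/unstable-manifold arguments this closes the loop and supplies the desired solitary wave.

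The main obstacle is the sign bookkeeping in the last step. The coefficients $\alpha$, $\beta_0/\beta$, $w_1$ and $w_2$ are rational functions of $c$ whose zeros occur precisely at $c=\sqrt{2}/4$, $c=1$ and $c=\sqrt{2}$, which are exactly the critical values delimiting the hypotheses; in particular $c=1$ is excluded because both $w_1$ and $w_2$ vanish there, degenerating \eqref{eq.2} to the Camassa--Holm phase portrait. Verifying in each regime that the cubic $Q$ furnishes a root $\varphi_\ast$ of the right sign strictly between $0$ and the singularity, and that $\varphi^{2}Q(\varphi)/(\varphi-\sigma+\beta_0/\beta)$ remains non-negative on the full interval between them, is where the sharp inequalities on $c$ and $\sigma$ are actually consumed, and is the technical heart of the theorem.
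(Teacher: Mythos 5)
Your overall strategy---classify the equilibria of \eqref{eq.2} and then exhibit the homoclinic loop explicitly as a level set of the first integral $\zeta^2=\varphi^2\,Q(\varphi)/(\varphi-\sigma+\beta_0/\beta)$---is sound and is in fact \emph{stronger} than what the paper does in Section \ref{sec2}: the paper linearizes at $N_0$ and at the second equilibrium $N_\sigma=(\varphi_\sigma,0)$ (the root of its cubic $T$), and then simply asserts that an orbit leaves the saddle, encircles the center and returns. Your integrating-factor computation is correct and reproduces exactly the relation $\varphi_x^2=\varphi^2\,S(\varphi,\sigma)/Y(\varphi,\sigma)$ that the paper only introduces later, in the stability proof of Section \ref{sec3}; your identification of $\sqrt{2}/4$, $1$, $\sqrt{2}$ as the zeros of $w_1,w_2$ is also right, and the sign condition you would extract from the product of the roots of $Q$ is the same one the paper extracts from $T$, namely $-\alpha^3(c-\sigma)/w_2>0$ up to a positive factor.

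There are, however, two genuine gaps. First, the claim that ``direct inspection'' gives $(c-\sigma)(\beta_0/\beta-\sigma)>0$ in both regimes is false in the first regime: one computes
\begin{equation*}
\frac{\beta_0}{\beta}-c=\frac{-2c^3(c^2+1)}{3c^4+8c^2-1}<0\qquad\text{for all } c>\tfrac{\sqrt{2}}{4},
\end{equation*}
so $\beta_0/\beta<c$ always, and for any $\sigma\in(\beta_0/\beta,\,c)$ the product is negative, the origin is a linear center, and no homoclinic orbit can emanate from it. Your argument (like the paper's, whose own case (b) records that $N_0$ is a center when $\sigma<c$ under its standing assumption $\sigma>\beta_0/\beta$) can only cover the first regime after the further restriction $\sigma<\beta_0/\beta$; as written, the local step fails on part of the parameter set you claim to treat. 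Second, the step you yourself label the ``technical heart''---showing that $Q$ has a positive root $\varphi_\ast$ on the correct side of the singular line $\varphi=\sigma-\beta_0/\beta$ and that $Q$ and $\varphi-\sigma+\beta_0/\beta$ keep a common sign on all of $(0,\varphi_\ast)$---is announced but not carried out, and this is precisely where the hypotheses on $c$ and $\sigma$ are consumed. The paper does execute its version of this computation (uniqueness of the real root of $T$ via the positive discriminant, then the root--coefficient relation forcing $-4\alpha^3(c-\sigma)/w_2>0$, which yields the two stated regimes), so the deferral leaves out the one calculation that actually produces the theorem's inequalities.
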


 \begin{proof}
 We talk about the two critical points of system \eqref{eq.2}: one at the origin $N_0=(0,0)$, and one given by $N_\sigma=(\varphi_\sigma,0)$, where $\varphi_\sigma$ is the unique real root of the
\begin{align}\label{eq.A}
T(\varphi)=\frac{w_2}{4\alpha^3}\varphi^3+\frac{w_1}{3\alpha^2}\varphi^2+\frac{3}{2}\varphi+c-\sigma.
\end{align}
Define that $\varphi=\eta-\frac{4}{9}\frac{\alpha w_1}{w_2}$, by the property of cubic polynomial, we can rewrite\\
\begin{equation}\label{eq.F}
T(\varphi)=T(\eta)=\frac{w_2}{4\alpha^3}(\eta^3+3p\eta+2q),
\end{equation}
where
\begin{equation*}
p=\frac{2\alpha^3}{w_2}-\frac{16w_1^2\alpha^2}{81w_2^2},~~~q=\frac{64w_1^3\alpha^3}{729w_2^3}-\frac{4w_1\alpha^4}{3w_2^2}+\frac{2\alpha^3(c-\sigma)}{w_2}.
\end{equation*}
The determinator of equation $T(\eta)=0$ is defined by
\begin{equation*}
\Delta=q^2+p^3
     =\left(\frac{64w_1^3\alpha^3}{729w_2^3}-\frac{4w_1\alpha^4}{3w_2^2}+\frac{2\alpha^3(c-\sigma)}{w_2}\right)^2+\left(\frac{2\alpha^3}{w_2}-\frac{16w_1^3\alpha^2}{81w_2^2}\right)^3.
\end{equation*}
Note that $\Delta>0$, then \eqref{eq.F}  has exactly one real root.   We can use the Cardano's formula for
third-order polynomials to find that $\eta_1$, which takes the form of
\begin{align*}
 \eta_1=\sqrt[3]{-\frac{q}{2}+\sqrt {\triangle}}+\sqrt[3]{-\frac{q}{2}-\sqrt {\triangle}}.
\end{align*}
Hence, we have
\begin{align*}
 \varphi_\sigma=\eta_1-\frac{4}{9}\frac{\alpha w_1}{w_2}.
\end{align*}
Both fixed points lie in the right half-plane where $\varphi> 0$ and we expect physically relevant solitary waves of elevation.
According to the relationship between the roots and the coefficients of the cubic equation for $\varphi_\sigma>0$, we can obtain
\begin{equation*}
-\frac{4\alpha^3(c-\sigma)}{w_2}>0,
\end{equation*}
which implies\\
\begin{align*}
\begin{cases}
c-\sigma>0,\\
w_2<0,
\end{cases}\text{or}~~~~
\begin{cases}
c-\sigma<0,\\
w_2>0.
\end{cases}
\end{align*}
 Hence, we have
 \begin{equation*}
 \frac{\sqrt{2}}{4}<c<\sqrt{2}, ~c\neq1 ~and~ \sigma<c~~~or ~~~c>\sqrt{2}~and~ \sigma>c.
\end{equation*}

To linearize the system near its critical points we compute the Jacobian Matrix $J$ of  system \eqref{eq.2} and evaluate it at $N_0$ and $N_\sigma$. All fixed points lie on the horizontal axis of the phase plane, so the Jacobian takes the form
$$
J=
\left({\begin{array}{cc}
0&1\\
~~\\
J_\sigma&0\\
\end{array}}
\right),
$$
where $J_\sigma=\partial_\sigma\zeta'$. Since the trace of $J$ is zero, all eigenvalues at the critical points are of the form
$$\lambda^{\pm}=\pm\sqrt{J_\sigma}$$
and the behavior of the system in the vicinity of the fixed points depends on the sign of $J_\sigma$.  Notice that
\begin{align*}
 \sigma-\frac{\beta_0}{\beta}>0,
 \end{align*}
 then at $N_0$, we have
\begin{equation*}
J_\sigma|_{(0,0)}=\frac{\sigma-c}{\sigma-\frac{\beta_0}{\beta}}
\begin{cases}
<0~~~~\text{if}~~\sigma<c,\\
=0~~~~\text{if}~~\sigma=c,\\
>0~~~~\text{if}~~\sigma>c,\\
\end{cases}
\end{equation*}
the eigenvalues of $J$ at the origin are $\lambda^{\pm}_0=\pm \sqrt{\frac{\sigma-c}{\sigma-\frac{\beta_0}{\beta}}}$. Therefore, we have\\
\noindent{\rm(a)}
\begin{minipage}[t]{0.9\linewidth}
When $\sigma>c$, we get two distinct
real eigenvalues of opposing sign and hence $N_0$ is a saddle point for the linearized system.
\end{minipage}\\
\noindent{\rm(b)}
\begin{minipage}[t]{0.9\linewidth}
When $\sigma<c$, the number $J_\sigma$ is negative, and the eigenvalues are purely imaginary. Hence, we can get $N_0$ is a center for the linearized system.
\end{minipage}\\

Evaluating $J_\sigma$ at the other critical point $N_\sigma=(\varphi_\sigma,0)$, we find that
\begin{equation*}
J_\sigma|_{(\varphi_\sigma,0)}=\frac{\varphi_\sigma T'(\varphi_\sigma)}{\varphi_\sigma-\sigma+\frac{\beta_0}{\beta}},
\end{equation*}
where $T(\varphi)$ is defined by \eqref{eq.A} and   has $\varphi_\sigma$ as its unique real root.
Important for our analysis is the fact that only when $\varphi_\sigma>0$, the fixed point $N_\sigma$ lies in the right half-plane where $\varphi > 0$.
In this case, we can hope to find a homoclinic orbit emerging and returning to the origin since $N_\sigma$ is a center whereas $N_0$ is a saddle point for the linearized system. Based on the above discussion, since $J_\sigma$ is nonzero whenever $\sigma\neq c$, both fixed points are hyperbolic which means that a (topological) saddle point for the linearized system remains a saddle also for the nonlinear system \cite{r35}.

We look for a solution of  system \eqref{eq.2} which leaves the saddle point $N_0$ in the direction of the unstable Eigenspace spanned by the eigenvector $(1, \lambda^+_0 )$, encircles the center fixed point $N_\sigma$ and returns to the fixed point at the origin. This concludes the proof of existence of a homoclinic orbit starting and ending
in the origin which corresponds to a solitary  wave solution of \eqref{eq.20}, cf. Figure \ref{f1}. Hence,\\
\noindent{\rm(a)}
\begin{minipage}[t]{0.9\linewidth}
If $\frac{\sqrt{2}}{4}<c<\sqrt{2}$, $c\neq1$ and $\sigma<c$, the $N_\sigma=(\varphi_\sigma,0)$ lie in the right half-plane, there exists solitary  wave solutions of  the R-CH equation \eqref{eq.10}.
\end{minipage}\\
\noindent{\rm(a)}
\begin{minipage}[t]{0.9\linewidth}
If $c>\sqrt{2}$ and $\sigma>c$, the $N_\sigma=(\varphi_\sigma,0)$ lie in the right half-plane, there exists solitary  wave solutions of  the R-CH equation \eqref{eq.10}.
\end{minipage}\\
\end{proof}
\begin{figure}[H]
\centering
\includegraphics[height=6cm,width=7.5cm]{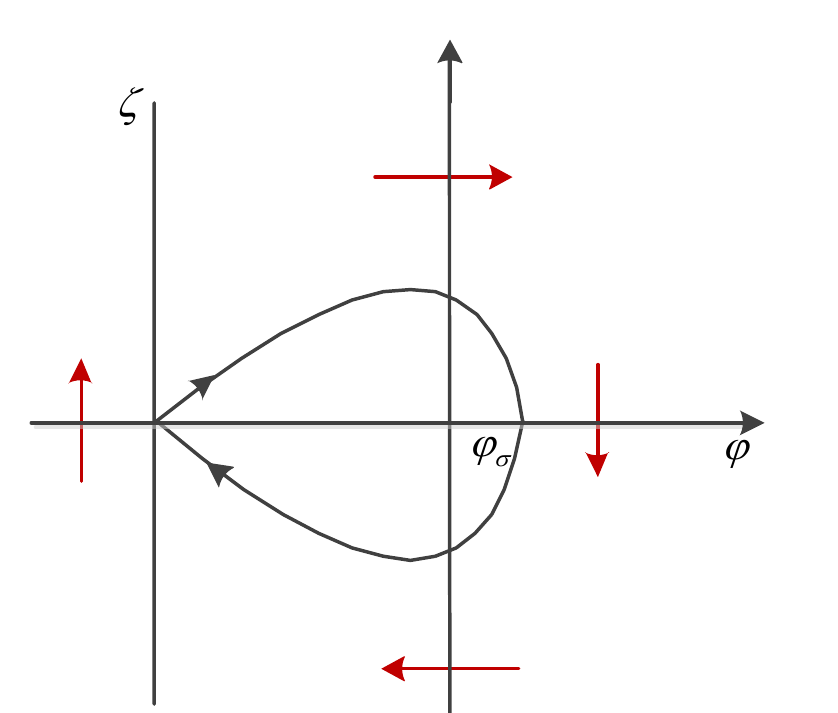}\\
\caption{Phase portrait of system (2.3) for $\sigma=2$ with a homoclinic orbit.}
\label{f1}
\end{figure}

\section{Stability of  Solitary Waves}\label{sec3}
In this section, we  prove that solitary  waves of the R-CH equation \eqref{eq.10} are orbitally stable.

If we define the orbit of a function $\varphi$ to be the set $O(\varphi) = \{\varphi(\cdot, x_0), x_0\in \mathbb{R}\}$, a solitary wave of the R-CH equation \eqref{eq.10} is called orbitally stable if profiles near its orbit remain at all later times near the orbit. More precisely:

\begin{definition}\label{D1}{\rm (Orbital stability)} The solitary waves $\varphi$ of the R-CH equation \eqref{eq.10} is stable if for every $\varepsilon>0$ there exists $\gamma>0$ such that if $u\in C([0,T)$; $H^2(\mathbb{R}))$ for some $0 < T\leq \infty$ is a solution to \eqref{eq.10} with $||u(0)-\varphi||_{H^2} \leq \gamma$, then for every $t \in [0,T)$, we have
\begin{align*}
\underset{\xi \in \mathbb{R}}{\inf}||u(t,.)-\varphi(.-\xi)||_{H^2}\leq \varepsilon.
\end{align*}
Otherwise, the solution is called unstable.
\end{definition}

\begin{theorem}\label{T1}\rm{(}\emph{See} \rm{\cite{r30}}\rm{)}\it
All solitray wave solutions $\varphi(x-\sigma t)$ of  Eq.\eqref{eq.10} are stable if and only if the scalar function
\begin{equation}\label{eq.55}
d(\sigma)=\sigma E(\varphi)-F(\varphi).
\end{equation}
is convex in a neighborhood of $\sigma$.
\end{theorem}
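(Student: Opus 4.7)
The plan is to cast the R-CH equation \eqref{eq.10} as an infinite-dimensional Hamiltonian system invariant under spatial translations and then invoke the abstract Grillakis--Shatah--Strauss machinery \cite{r30} with $F$ playing the role of the Hamiltonian and $E$ the charge that generates the translation action. First I would observe that the traveling-wave profile $\varphi_\sigma$ from Section~\ref{sec2} is a critical point of $V_\sigma(u):=F(u)-\sigma E(u)$: integrating \eqref{eq.y} once produces \eqref{eq.20}, which is exactly the Euler--Lagrange equation $F'(\varphi_\sigma)=\sigma E'(\varphi_\sigma)$. Differentiating $d(\sigma)=\sigma E(\varphi_\sigma)-F(\varphi_\sigma)$ along the family and using this criticality to cancel the inner product with $\partial_\sigma\varphi_\sigma$ gives the envelope formulas
\begin{equation*}
d'(\sigma)=E(\varphi_\sigma),\qquad d''(\sigma)=\frac{d}{d\sigma}E(\varphi_\sigma),
\end{equation*}
so convexity of $d$ reduces to the familiar Vakhitov--Kolokolov slope criterion.

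The technical core of the proof is the spectral analysis of the self-adjoint Hessian
\begin{equation*}
L_\sigma:=F''(\varphi_\sigma)-\sigma E''(\varphi_\sigma)
\end{equation*}
acting on $L^2(\mathbb{R})$ with domain $H^2(\mathbb{R})$. The three GSS hypotheses I would verify are: (i) $L_\sigma$ has exactly one simple negative eigenvalue; (ii) $\ker L_\sigma=\mathrm{span}\{\partial_x\varphi_\sigma\}$, which is immediate from differentiating the Euler--Lagrange equation in $x$ and is simple because $\partial_x\varphi_\sigma$ has exactly one sign change by the homoclinic picture in Figure~\ref{f1}; and (iii) the rest of the spectrum of $L_\sigma$ is bounded below by a positive constant, the essential spectrum being pushed away from zero because the coefficients of $L_\sigma$ decay to constants of the correct sign at $\pm\infty$ under the parameter restrictions isolated in Section~\ref{sec2} (in particular $\sigma-\beta_0/\beta>0$ and the sign conditions on $c-\sigma$).

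The main obstacle is step~(i): the presence of the quartic and quintic Coriolis corrections governed by $w_1,w_2$ makes the direct Sturm--Liouville count more delicate than in the pure Camassa--Holm case treated by Constantin--Strauss \cite{r21}. The cleanest route I expect is to exploit the identity
\begin{equation*}
L_\sigma\,\partial_\sigma\varphi_\sigma=E'(\varphi_\sigma),
\end{equation*}
obtained by differentiating $F'(\varphi_\sigma)=\sigma E'(\varphi_\sigma)$ in $\sigma$; pairing with $\partial_\sigma\varphi_\sigma$ yields $d''(\sigma)=\langle L_\sigma^{-1}E'(\varphi_\sigma),E'(\varphi_\sigma)\rangle$ on the orthogonal complement of $\ker L_\sigma$ (which is a legitimate space because $\langle E'(\varphi_\sigma),\partial_x\varphi_\sigma\rangle=0$ by integration by parts). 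A Sylvester-type inertia argument, combined with positivity of $L_\sigma$ on the natural codimension-two subspace $\{\partial_x\varphi_\sigma,\partial_\sigma\varphi_\sigma\}^\perp$, then pins down the negative-eigenvalue count.

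Once the three spectral conditions are verified, the abstract theorem of \cite{r30} applies verbatim and delivers the claimed equivalence: $\varphi_\sigma$ is orbitally stable in the sense of Definition~\ref{D1} precisely when $d$ is convex at $\sigma$. Conservation of $E$ and $F$ under the $H^2$-flow then converts the coercive lower bound on the augmented Lyapunov functional $V_\sigma(u)-V_\sigma(\varphi_\sigma)+\tfrac{1}{2}d''(\sigma)(E(u)-E(\varphi_\sigma))^2$ into the uniform-in-time $H^2$ closeness to the translation orbit required by Definition~\ref{D1}, closing the argument in both directions.
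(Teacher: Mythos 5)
The first thing to note is that the paper does not prove Theorem \ref{T1} at all: it is stated as a quotation of the abstract Grillakis--Shatah--Strauss criterion from \cite{r30}, and the work of fitting the R-CH equation into that framework (computing $E''$, $F''$, identifying \eqref{eq.20} with $\sigma E'(\varphi)-F'(\varphi)=0$, analyzing the spectrum of $H_\sigma=\sigma E''(\varphi)-F''(\varphi)$, and deriving $d'(\sigma)=E(\varphi)$ as in \eqref{eq.D}) is carried out inside the proof of Theorem \ref{th.w}. Your proposal therefore supplies an argument where the paper supplies a citation, and in substance it reproduces most of what the authors do later: the criticality identity, the envelope formula $d'(\sigma)=E(\varphi_\sigma)$, the kernel $\ker L_\sigma=\mathrm{span}\{\partial_x\varphi_\sigma\}$, and the location of the essential spectrum at $[\sigma-c,\infty)$ all coincide with the paper's verification.

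The one place where you genuinely diverge, and where your route is weaker, is hypothesis (i), the count of negative eigenvalues. The paper's argument is direct Sturm--Liouville oscillation theory: since $H_\sigma\varphi_x=0$ and $\varphi_x$ has exactly one zero (the solitary wave has a single crest), $0$ must be the eigenvalue $\lambda_1$ in the ordered sequence $\lambda_0<\lambda_1<\cdots$, so there is exactly one negative eigenvalue $\lambda_0$ and the remaining spectrum is positive and bounded away from zero. Your proposed alternative --- pairing $L_\sigma\partial_\sigma\varphi_\sigma=E'(\varphi_\sigma)$ with $\partial_\sigma\varphi_\sigma$ and then running a Sylvester-type inertia argument on $\{\partial_x\varphi_\sigma,\partial_\sigma\varphi_\sigma\}^\perp$ --- is circular as stated: positivity of $L_\sigma$ on that codimension-two subspace is precisely what one normally deduces \emph{after} knowing that $n(L_\sigma)=1$ and $d''(\sigma)>0$; it is not available as an independent input to pin down the count. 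The identity $d''(\sigma)=\langle L_\sigma^{-1}E'(\varphi_\sigma),E'(\varphi_\sigma)\rangle$ tells you how the inertia changes upon restricting to $\{E'(\varphi_\sigma)\}^\perp$, but it does not by itself bound $n(L_\sigma)$. You should replace this step by the oscillation-theoretic count (as in the paper and in Constantin--Strauss \cite{r21}); with that substitution your outline matches the paper's logic, and the remaining content of Theorem \ref{T1} is exactly the abstract theorem of \cite{r30}, which neither you nor the paper reproves.
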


The following theorem is important to determine the sign of $d''(\sigma)$.

\begin{theorem}\label{T2}\rm{(}\emph{See} \rm{\cite{r322}}\rm{) } \it Let
\begin{equation*}
G_b(x)=g_n(b)x^n+g_{n-1}(b)x^{n-1}+\cdot\cdot\cdot+g_1(b)x+g_0(b)
\end{equation*}
be a family of real polynomials depending also polynomially on a real parameter $b$. If the following three conditions are met, then for all $b\in I$, $G_b(x)>0$. Suppose that there exists an open interval $I\subset\mathbb{R}$ such that:\\
~~\\
{\rm(i)} \it There is some $b_0\in I$, such that $G_{b_0}(x)>0$;\\
~\\
\rm{(ii)} \it For all $b\in I$, $\Delta_x(G_b)\neq 0$;\\
~\\
\rm{(iii)} \it For all $b\in I$, $g_n(b)\neq 0$.
\end{theorem}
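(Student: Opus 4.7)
The plan is to prove the theorem by a continuity argument that tracks how the real roots of $G_b$ vary with $b$. The idea is that assumptions (ii) and (iii) together prevent any topological change in the root configuration as $b$ moves through the connected interval $I$, so that the strict positivity at $b_0$ supplied by (i) propagates to every other parameter value.

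First I would invoke condition (iii) to guarantee that $\deg G_b = n$ for every $b\in I$, so that $G_b$ has exactly $n$ complex roots counted with multiplicity. Combined with condition (ii) --- nonvanishing of the discriminant --- these $n$ roots are simple and vary continuously with $b$; this can be extracted either from the implicit function theorem applied to each simple root, or from the elementary fact that simple roots of a polynomial family depend continuously on the coefficients. Next I would argue that the number of real roots of $G_b$ can change as $b$ varies only if either two complex-conjugate roots collide on the real axis (producing a repeated real root, forcing $\Delta_x(G_b)=0$) or a root escapes to infinity (forcing $g_n(b)=0$). Both are ruled out, so the real-root count is locally constant, and the connectedness of $I$ promotes this to a global constant. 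Since $G_{b_0}(x)>0$ has no real zero by (i), the constant equals zero, and hence $G_b$ has no real root at all for any $b\in I$.

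The final step converts the absence of real zeros into strict positivity via the intermediate value theorem in the parameter. For each fixed $x\in\mathbb{R}$, the map $b\mapsto G_b(x)$ is a continuous (in fact polynomial) function of $b$ that is nowhere zero on the connected interval $I$ and takes the positive value $G_{b_0}(x)>0$ at $b_0$; hence $G_b(x)>0$ for every $b\in I$. Since $x\in\mathbb{R}$ was arbitrary, this yields the desired conclusion $G_b(x)>0$ for all $b\in I$ and all $x\in\mathbb{R}$.

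The main obstacle I anticipate is a rigorous justification of the continuous tracking of roots in $b$, since roots of a parametric polynomial can in general branch or jump when multiplicities appear or when the leading coefficient degenerates. Assumption (iii) supplies the uniform bound (via, e.g., Cauchy's root bound) that keeps all roots in a compact region on every compact sub-interval of $I$, while (ii) ensures each root is simple so that it can be locally parametrized by a smooth branch. Once this continuity is in hand, the remainder of the proof is purely topological and relies only on the connectedness of $I$ and the single-point positivity hypothesis (i).
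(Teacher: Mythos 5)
Your argument is correct, but note that the paper itself offers no proof of this statement: Theorem \ref{T2} is imported verbatim from the cited reference \cite{r322}, so there is no in-paper proof to compare against. Your continuity-of-roots argument is in fact the standard proof of this lemma and is essentially the one given in that reference: (iii) keeps the degree fixed and (via a Cauchy-type bound) confines all roots to a compact set over compact subintervals of $I$, (ii) keeps every root simple so that real roots cannot collide with their conjugates and non-real conjugate pairs cannot land on the real axis, whence the number of real roots is locally constant and, by connectedness of $I$, globally constant and equal to its value $0$ at $b_0$; the sign is then propagated from $b_0$ by continuity of $b\mapsto G_b(x_0)$ at a single point $x_0$. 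The only step worth spelling out more carefully in a written version is the local constancy of the real-root count: one should argue both that a simple real root stays real (implicit function theorem plus the fact that a real polynomial's non-real roots come in conjugate pairs, so a real root leaving the axis would force a double root) and that a simple non-real root stays non-real, i.e., the set of parameters with exactly $k$ real roots is both open and closed in $I$. With that made explicit, your proof is complete and matches the intended argument.
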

As usual, we write $\Delta_x(G)$ to denote the discriminant of a polynomial $G(x)=a_nx^n+\cdot\cdot\cdot+a_1x+a_0$, that is, $$\Delta_x(G)=(-1)^\frac{n(n-1)}{2}\frac{1}{a_n}\text{Res}\left(G(x),G'(x)\right),$$
where $\text{Res}(G,G')$ is the resultant of $G$ and $G'$.

\begin{theorem}\label{th.w}
All solitary waves of the R-CH equation \eqref{eq.10} are stable.
\end{theorem}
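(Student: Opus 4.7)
The plan is to apply Theorem \ref{T1}, which reduces the stability problem to verifying that $d(\sigma) = \sigma E(\varphi) - F(\varphi)$ is convex at the admissible wave speed, i.e.\ to showing $d''(\sigma) > 0$. The soliton $\varphi = \varphi_\sigma$ is precisely a critical point of the constrained functional $F - \sigma E$ (the Euler--Lagrange equation is exactly \eqref{eq.20}), so by the standard Grillakis--Shatah--Strauss identity one expects $d'(\sigma) = E(\varphi_\sigma)$, and the task is then to differentiate this once more in $\sigma$ and determine its sign.

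First I would extract a first integral of \eqref{eq.20} by multiplying through by $\varphi_x$ and integrating, using the decay $\varphi^{(n)} \to 0$ at infinity. This produces an identity of the form $\varphi_x^2 = R(\varphi;\sigma)$, where $R$ is a rational function whose numerator is an explicit quintic polynomial in $\varphi$ with coefficients built from $c,\sigma,\alpha,\beta,\beta_0,w_1,w_2$, and whose denominator is the linear factor $\sigma - \tfrac{\beta_0}{\beta} - \varphi$ appearing in system \eqref{eq.2}. The homoclinic orbit constructed in Section \ref{sec2} corresponds to the range $\varphi \in [0,\varphi_m(\sigma)]$, with $\varphi_m(\sigma)$ the smallest positive zero of the numerator of $R(\cdot;\sigma)$.

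Next I would use the change of variable $dx = d\varphi/\sqrt{R(\varphi;\sigma)}$ to recast both conserved quantities
\begin{equation*}
E(\varphi_\sigma) = \int_0^{\varphi_m(\sigma)} \frac{\varphi^2 + R(\varphi;\sigma)}{\sqrt{R(\varphi;\sigma)}}\, d\varphi,
\end{equation*}
and $F(\varphi_\sigma)$ similarly, as one-dimensional integrals purely in $\varphi$. After confirming $d'(\sigma) = E(\varphi_\sigma)$ (the boundary terms from differentiating $\varphi_m(\sigma)$ cancel because $R(\varphi_m;\sigma) = 0$), another differentiation in $\sigma$ reduces $d''(\sigma)$ to an integral whose integrand is elementary and whose sign is controlled by an algebraic expression in $c$, $\sigma$ and $\varphi_m(\sigma)$. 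The sign question then collapses to the positivity of a single polynomial expression.

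Finally, packaging the relevant algebraic inequality in the form required by Theorem \ref{T2}, I would set $G_b(x) > 0$, where $x$ plays the role of $\varphi_m$ (or an algebraic substitute) and $b$ collects the remaining parameters $c,\sigma$. To apply Theorem \ref{T2} I would split into the two open parameter regimes given by Theorem~2.1, namely $\{\tfrac{\sqrt{2}}{4} < c < \sqrt{2},\, c\neq 1,\, \sigma < c\}$ and $\{c > \sqrt{2},\, \sigma > c\}$; on each connected component I would pick a convenient test point $b_0$ (e.g.\ numerical values close to the middle of the interval) to verify condition (i), and then check that the discriminant $\Delta_x(G_b)$ and the leading coefficient $g_n(b)$ do not vanish on the interval, giving conditions (ii) and (iii). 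The main obstacle is precisely this algebraic bookkeeping: the quartic nonlinearity in \eqref{eq.10} makes $R(\varphi;\sigma)$ a quintic with complicated coefficients that are themselves rational functions of $c$, so obtaining a tractable polynomial $G_b$, computing its discriminant as a polynomial in $c,\sigma$, and showing that the discriminant has no real zero inside either admissible regime will require careful (almost certainly symbolic) computation, with the borderline values $c = \tfrac{\sqrt{2}}{4}$, $c = 1$, $c = \sqrt{2}$ and $\sigma = c$ treated as the boundaries of the intervals $I$ in Theorem \ref{T2}.
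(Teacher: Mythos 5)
Your overall strategy matches the paper's: reduce via Theorem \ref{T1} to showing $d''(\sigma)>0$, pass to the first integral $\varphi_x^2=\varphi^2\,S(\varphi,\sigma)/Y(\varphi,\sigma)$, rewrite $d'(\sigma)=E(\varphi)$ as a one-dimensional integral in $\varphi$ over $[0,h(\sigma)]$, and reduce the sign of $d''$ to a polynomial positivity checked with Theorem \ref{T2}. However, two steps would fail as you describe them. First, the differentiation in $\sigma$: you claim the boundary term from the moving endpoint $\varphi_m(\sigma)$ cancels ``because $R(\varphi_m;\sigma)=0$,'' but the vanishing of $R$ at the endpoint is exactly what makes the integrand $(\varphi^2+R)/\sqrt{R}$ \emph{singular} there (it blows up like $(\varphi_m-\varphi)^{-1/2}$), so a naive Leibniz rule with a moving endpoint produces a formally infinite boundary contribution rather than a vanishing one. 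The paper circumvents this by first rescaling $y=Hz$ onto the fixed interval $(0,1)$ with $H=h(\sigma)$, then justifying differentiation under the integral via an integrable dominating function $g(z)=k/\sqrt{1-z}$, and finally writing $d''(\sigma)=\partial_H U(H)\,h'(\sigma)$ with $h'(\sigma)>0$.

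Second, the sign question does not ``collapse to an algebraic expression in $c$, $\sigma$ and $\varphi_m$'': after differentiating, one must show that the \emph{integrand} is nonnegative pointwise, i.e.\ that a two-variable polynomial $N(z,H)$ in the integration variable $z\in(0,1)$ and the amplitude $H>0$ is nonnegative, with $c$ (hence $A$, $B$, $K$) held fixed. In the paper's application of Theorem \ref{T2} the variable $x$ therefore comes from the substitution $z=x^2/(1+x^2)$ and the single real parameter $b$ from $H=b^2$; your assignment ($x\leftrightarrow\varphi_m$, with $b$ ``collecting'' both $c$ and $\sigma$) does not fit the one-parameter hypothesis of Theorem \ref{T2} and aims the positivity argument at the wrong object. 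Finally, you omit the Sturm--Liouville analysis of the linearized operator $H_\sigma=\sigma E''(\varphi)-F''(\varphi)$ (exactly one negative eigenvalue, kernel spanned by $\varphi_x$, remaining spectrum bounded away from zero), which the paper verifies and which the Grillakis--Shatah--Strauss framework requires before the convexity criterion can be invoked.
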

\begin{proof}
We  show that the problem falls within the framework of the general approach to nonlinear stability provided by \cite{r30}. We compute the first and second variational derivatives of the conserved quantities $E$ and $F$, which are given by
\begin{equation*}
E'(u)v=(u-u_{xx},v),
\end{equation*}
\begin{equation*}
E''(u)v=(I-\partial_x^2,v),
\end{equation*}
\begin{equation*}
F'(u)v=\left(cu+\frac{3}{2}u^2-\frac{\beta_0}{\beta}u_{xx}+\frac{w_1}{3\alpha^2}u^3+\frac{w_2}{4\alpha^3}u^4-\frac{1}{2}u_x^2-uu_{xx},v\right),
\end{equation*}
and
\begin{equation*}
F''(u)v=\left(c+3u-\frac{\beta_0}{\beta}\partial_x^2+\frac{w_1}{\alpha^2}u^2+\frac{w_2}{\alpha^3}u^3-u_x\partial_x-u\partial_x^2-u_{xx},v\right).
\end{equation*}
In terms of the functionals $E$ and $F$, Eq.\eqref{eq.20}
can be written as
\begin{equation*}\label{eq.50}
\sigma E'(\varphi)-F'(\varphi)=0,
\end{equation*}
where $E'$ and $F'$ are the Fr\'{e}chet derivatives of $E$ and $F$ respectively in $H^1(\mathbb{R})$. The
linearized Hamiltonian operator $H_\sigma$ of $(\sigma E'-F')$ around $\varphi$ is defined by
\begin{align}
\notag H_\sigma&=\sigma E''(\varphi)-F''(\varphi)\\
 \notag  &=-\left(\sigma-\frac{\beta_0}{\beta}-\varphi\right)\partial_x^2-3\varphi-\frac{w_1}{\alpha^2}\varphi^2+\sigma+\frac{w_2}{\alpha^3}\varphi^3+\varphi_{xx}-c\\
         &=-\partial_x\left(\sigma-\frac{\beta_0}{\beta}-\varphi\right)\partial_x+\sigma-c-3\varphi-\frac{w_1}{\alpha^2}\varphi^2+\frac{w_2}{\alpha^3}\varphi^3+\varphi_{xx}.
\end{align}
Therefore, we can write the spectral equation $H_\sigma v=\lambda v$ as the Sturm-Liouville problem
\begin{equation*}
-(Pv_x)_x+(Q-\lambda)v=0,
\end{equation*}
where\\
$$P(x)=\sigma-\frac{\beta_0}{\beta}-\varphi,~~Q(x)=\sigma-c-3\varphi-\frac{w_1}{\alpha^2}\varphi^2+\frac{w_2}{\alpha^3}\varphi^3+\varphi_{xx}.$$
As is well-known, any regular Sturm-Liouville system has an infinite sequence of real eigenvalues $\lambda_0<\lambda_1<\lambda_2<\cdot\cdot\cdot$ with $\lim\limits_{n\to \infty}\lambda_n=\infty$ (see \cite{R31}). The eigenfunction $v_n(x)$ belonging to the eigenvalue $\lambda_n$ is uniquely determined up to a constant factor and has exactly $n$ zeros. In addition, we find that $H_\sigma$ is a self-adjoint with essential spectrum is given by $[\sigma-c,\infty)$ in view of the fact that $\lim\limits_{x\to \infty}{\rm\inf }~Q(x)=\sigma-c$ ( see \cite{R32}). It is very easy for us to verify that \eqref{eq.20} is equivalent to $H_\sigma(\varphi_x)=0$, and we know that
 the solitary wave solution of \eqref{eq.20} has a unique maximum, so the $\varphi_x$  has exactly one zero over $\mathbb{R}$.
We conclude that with only one negative eigenvalue, while the rest of the spectrum is positive and bounded away from zero. This proves what we said.

According to Theorem \ref{T1}, we know that the stability would be ensured by the convexity of the scalar function $d(\sigma)$ , and
differentiating $d(\sigma)$ with respect to $\sigma$ , we find that
\begin{equation}\label{eq.D}
d'(\sigma)=E(\varphi)+\left(\sigma E'(\varphi)-F'(\varphi),\partial_\sigma\varphi\right)=E(\varphi).
\end{equation}
In the following, we prove that $d''(\sigma)>0$ to infer that stability of $\varphi$. Let $\varphi$ be a solitary wave solution of \eqref{eq.10}, notice that
\begin{equation*}
\varphi_x^2=\varphi^2\frac{\frac{w_2}{10\alpha^3}\varphi^3+\frac{w_1}{6\alpha^2}\varphi^2+\varphi+c-\sigma}{\varphi-\sigma+\frac{\beta_0}{\beta}},
\end{equation*}
and that it is symmetric with respect to the crest. Therefore, $\varphi_x<0$ on $(0,\infty)$,   we have
\begin{equation*}
\varphi=-\varphi_x\sqrt{\frac{Y(\varphi,\sigma)}{S(\varphi,\sigma)}},
\end{equation*}
where
\begin{align*}
 S(\varphi,\sigma)=\frac{w_2}{10\alpha^3}\varphi^3+\frac{w_1}{6\alpha^2}\varphi^2+\varphi+c-\sigma
\end{align*}
and
\begin{align*}
Y(\varphi,\sigma)=\varphi-\sigma+\frac{\beta_0}{\beta}.
\end{align*}
Thus,  we have
\begin{align}\label{eq.60}
\notag d'(\sigma)&=\frac{1}{2} \int_{\mathbb{R}} (\varphi+\varphi_x^2)dx=\int_{0}^\infty \varphi^2\left(1+\frac{S}{Y}(\varphi,\sigma)\right)dx\\
\notag&=-\int_{0}^\infty \varphi\varphi_x\left(\frac{S+Y}{\sqrt{SY}}(\varphi,\sigma)\right)dx\\
&=\int_{0}^{h(\sigma)} y\left(\frac{S+Y}{\sqrt{SY}}(y,\sigma)\right)dy.
\end{align}
In order to determine the sign of $d''(\sigma)$, we adopt a method in \cite{RRD}.
 The transformation $y=\varphi(x)$ is introduced in \eqref{eq.60} and used the fact that $\varphi$ has a unique maximum $h(\sigma)$, which corresponds to the unique real root of $S(\varphi,\sigma)$. Denoting $H=h(\sigma)$, we find that
\begin{align*}
\sigma=h^{-1}(H)=\frac{w_2}{10\alpha^3}H^3+\frac{w_1}{6\alpha^2}H^2+H+c,
\end{align*}
which allows us to rewrite \eqref{eq.60} as
\begin{align}
\notag U(H)&=\int_{0}^{H} y\left(\frac{S+Y}{\sqrt{SY}}\left(y,h^{-1}(H)\right)\right)dy\\
\notag &=\int_{0}^{1} Hz\left(\frac{S+Y}{\sqrt{SY}}\left(Hz,h^{-1}(H)\right)\right)dz,
\end{align}
where we introduced the transformation $y=Hz$. This improper integral is well defined on $(0,1)$ since the polynomials in the square root are positive in this interval, and the integrand becomes singular only when $z=1$. We denote the integrand by $f(z,H)$ and let $I=[H_1,H_2]$ with $H_1>0$, so that we can view $U(H)=\int_{0}^{1} f(z,H)dz$ as a parameter integral.
Observe that $f(\cdot,H)\in L^1(0,1)$ for all $H\in I$, and $f(z,\cdot)\in C^1(I)$ for all $z\in (0,1)$. Moreover, we find that there exists a function
$g\in L^1(0,1) $ such that $|\partial_Hf(z,H)| \leq g(z)$ for all $(z,H)\in (0,1)\times I$. Indeed,
\begin{equation*}
f(z,H)=H^2z\left(\frac{S+Y}{\sqrt{SY}}\left(Hz,h^{-1}(H)\right)\right),
\end{equation*}
where
\begin{equation*}
S\left(Hz,h^{-1}(H)\right)=H(1-z)\left[\frac{w_2}{10\alpha^3}H^2(1+z+z^2)+\frac{w_1}{6\alpha^2}H(1+z)+1\right],
\end{equation*}
and
\begin{equation*}
Y\left(Hz,h^{-1}(H)\right)=-c-\frac{\beta_0}{\beta}+H(z-1)-\frac{w_1}{6\alpha^2}H^2-\frac{w_2}{10\alpha^3}H^3.
\end{equation*}
Differentiating $f(z,H)$ with respect to $H$, we can obtain a positive constant $k$ that depends only on $I$ such that
\begin{align*}
|\partial_HU(H)| \leq k\frac{1}{\sqrt{1-z}},
\end{align*}
for $z\in (0,1)$ and $H\in I$. Let
$g(z)=\frac{k}{\sqrt{1-z}}$ and observe that $g(z) \in  L^1(0,1)$ which proves
the claim. According to the theorem on differentiation of parameter integrals in \cite{r33}, we can obtain
\begin{align*}
\partial_HU(H)=\int_{0}^{1} \partial_Hf(z,H)dz.
\end{align*}
Since the choice of $I$ was arbitrary, we can generalize this result to all $H\in (0,\infty)$. Notice  that $H=h(\sigma)$, then
\begin{align*}
d''(\sigma)=\partial_HU(H)h'(\sigma),
\end{align*}
where $h'(\sigma)>0$ denotes the derivative of amplitude $h$ with respect to wave speed $\sigma$.
To this end,  let
\begin{align*}
A=\frac{w_2}{10\alpha^3}~~~~B=\frac{w_1}{6\alpha^2}~~~~K=\frac{\beta_0}{\beta},
\end{align*}
consider the integrand of this expression and notice that it may be rewritten as
\begin{align*}
\partial_HU(H)=\frac{H^2z(1-z)N(z,H)}{\sqrt{(SY)^3}},
\end{align*}
where
\begin{equation*}
\begin{aligned}
\begin{split}
N(z,H)=&\frac{1}{2}(4\, A^3\, H^8\, z^5 + 4\, A^3\, H^8\, z^4 + 4\, A^3\, H^8\, z^3 + 5\, A^2\, B\, H^7\, z^5 + 12\, A^2\, B\, H^7\, z^4 \\
&+ 12\, A^2\, B\, H^7\, z^3 + 2\, A^2\, B\, H^7\, z^2 - 6\, A^2\, H^6\, z^6 + 12\, A^2\, H^6\, z^3 - 7\, A^2\, H^5\, K\, z^5 \\
&- 7\, A^2\, H^5\, K\, z^4 - 7\, A^2\, H^5\, K\, z^3 + 2\, A^2\, H^5\, K\, z^2 + 2\, A^2\, H^5\, K\, z + 2\, A^2\, H^5\, K \\
&+ 7\, A^2\, H^5\, c\, z^5 + 7\, A^2\, H^5\, c\, z^4 + 7\, A^2\, H^5\, c\, z^3 - 2\, A^2\, H^5\, c\, z^2 - 2\, A^2\, H^5\, c\, z\\
&- 2\, A^2\, H^5\, c + 9\, A\, B^2\, H^6\, z^4 + 12\, A\, B^2\, H^6\, z^3 + 6\, A\, B^2\, H^6\, z^2 - 11\, A\, B\, H^5\, z^5 \\
&+ 14\, A\, B\, H^5\, z^3 + 8\, A\, B\, H^5\, z^2- 13\, A\, B\, H^4\, K\, z^4-13\, A\, B\, H^4\, K\, z^3 \\
&-4\, A\, B\, H^4\, K\, z^2 + 2\, A\, B\, H^4\, K\, z + 2\, A\, B\, H^4\, K + 13\, A\, B\, H^4\, c\, z^4 \\
&+13\, A\, B\, H^4\, c\, z^3 + 4\, A\, B\, H^4\, c\, z^2 - 2\, A\, B\, H^4\, c\, z - 2\, A\, B\, H^4\, c - 8\, A\, H^4\, z^4\\
& + 8\, A\, H^4\, z^3 - 9\, A\, H^3\, K\, z^3 + 9\, A\, H^3\, c\, z^3 + A\, H^2\, K^2\, z^2 + A\, H^2\, K^2\, z \\
&+ A\, H^2\, K^2- 2\, A\, H^2\, K\, c\, z^2 - 2\, A\, H^2\, K\, c\, z - 2\, A\, H^2\, K\, c + A\, H^2\, c^2\, z^2 \\
&+A\, H^2\, c^2\, z + A\, H^2\, c^2+ 4\, B^3\, H^5\, z^3 + 4\, B^3\, H^5\, z^2 - 5\, B^2\, H^4\, z^4 + 10\, B^2\, H^4\, z^2 \\
&- 6\, B^2\, H^3\, K\, z^3 - 6\, B^2\, H^3\, K\, z^2 + 6\, B^2\, H^3\, c\, z^3 + 6\, B^2\, H^3\, c\, z^2 - 6\, B\, H^3\, z^3\\
 &+ 6\, B\, H^3\, z^2 - 6\, B\, H^2\, K\, z^2 - 2\, B\, H^2\, K + 6\, B\, H^2\, c\, z^2 + 2\, B\, H^2\, c + 2\, B\, H\, K^2\, z \\
&+ 2\, B\, H\, K^2 - 4\, B\, H\, K\, c\, z - 4\, B\, H\, K\, c + 2\, B\, H\, c^2\, z + 2\, B\, H\, c^2 \\
&+ 2\, H\, K\, z - 2\, H\, K - 2\, H\, c\, z + 2\, H\, c + 3\, K^2 - 6\, K\, c + 3\, c^2).
\end{split}
\end{aligned}
\end{equation*}
Since $z\in (0,1)$ and $H\in (0,\infty)$, we just need to prove that $N(z,H)\geq 0$.  Theorem \ref{T2} ensures that the polynomials do not change sign. Let's first introduce the transformation $z=\frac{x^2}{1+x^2}$ and $H=b^2$, which maps the strip $(0,1)\times(0,\infty)$ into the whole plane. Since the denominator of the resulting expression is positive everywhere, we only consider the
numerator which is of the form
\begin{align}\label{eq.40}
G_b(x)=g_{12}(b)x^{12}+g_{10}(b)x^{10}+\cdot\cdot\cdot +g_{2}(b)x^{2}+g_{0}(b),
\end{align}
where the $g_i( i=0, 2, 4, 6, 8, 10, 12)$ (see Appendix) depend polynomially on the real parameter $b$. We can check
that $G_1(x)>0$ on $\mathbb{R}$ by the Sturm method, which ensures that the
assumption (i) in Theorem \ref{T2} is satisfied. Next, we compute the discriminant of $G_b(x)$ and find that it is nonzero, which ensures that the
assumption (ii) in Theorem \ref{T2} is satisfied. Finally, we use the Sturm method once more to show that $g_{12}(b)>0$
for all $b\in \mathbb{R}$, which ensures that the assumption (iii) in Theorem \ref{T2} is satisfied. Hence, we may conclude that $G_b(x)>0$ on $\mathbb{R}$.
In summary, this proves that $N(z,H)\geq0$ and we have $d''(\sigma)>0$, this completes the proof of theorem.

\end{proof}

\section*{Appendix}
The $ g_i (i=0, 2, 4, 6, 8, 10, 12)$  in \eqref{eq.40} are as follows
\begin{equation*}
\begin{split}
g_0(b)=&2\, c - 2\, K + 2\, B\, c - 6\, K\, c + 2\, A^2\, K + 2\, B\, K^2 + A\, c^2 - 2\, A^2\, c + 2\, B\, c^2 + 3\, K^2 + 3\, c^2 \\
&- 2\, A\, B\, c - 2\, A\, K\, c - 4\, B\, K\, c + A\, K^2\, c,
\end{split}
\end{equation*}
\begin{equation*}
\begin{split}
g_2(b)=&16\, A^2\, K\, b^{10} - 16\, A^2\, b^{10}\, c + 14\, A\, B\, K\, b^8 - 16\, A\, B\, b^8\, c + A\, K^2\, b^4\, c + 7\, A\, K^2\, b^4 \\
&- 16\, A\, K\, b^4\, c + 8\, A\, b^4\, c^2 + 14\, B^2\, b^2\, c^2 + 16\, B\, K^2\, b^2 - 12\, B\, K\, b^4 - 32\, B\, K\, b^2\, c\\
 &+ 14\, B\, b^4\, c+ 2\, B\, b^2\, c^2 + 21\, K^2 - 2\, K\, b^2 - 42\, K\, c + 12\, b^2\, c + 21\, c^2,
\end{split}
\end{equation*}
\begin{equation*}
\begin{split}
g_4(b)=&2\, A^2\, B\, b^{14} + 42\, A^2\, K\, b^{10} - 42\, A^2\, b^{10}\, c + 6\, A\, B^2\, b^{12} + 36\, A\, B\, K\, b^8 + 8\, A\, B\, b^{10}\\
 &- 36\, A\, B\, b^8\, c + 21\, A\, K^2\, b^4 - 42\, A\, K\, b^4\, c + 21\, A\, b^4\, c^2 + 4\, B^3\, b^{10} - 6\, B^2\, K\, b^6 \\
 &+ 10\, B^2\, b^8+ 6\, B^2\, b^6\, c + 40\, B\, K^2\, b^2 - 36\, B\, K\, b^4 - 80\, B\, K\, b^2\, c + 6\, B\, b^6 + 36\, B\, b^4\, c\\
 & + 40\, B\, b^2\, c^2 + 45\, K^2 - 20\, K\, b^2 - 90\, K\, c + 20\, b^2\, c + 45\, c^2,
\end{split}
\end{equation*}
\begin{equation*}
\begin{split}
g_6(b)=&4\, A^3\, b^{16} + 20\, A^2\, B\, b^{14} + 61\, A^2\, K\, b^{10} + 12\, A^2\, b^{12} - 61\, A^2\, b^{10}\, c + 36\, A\, B^2\, b^{12} \\
&+ 31\, A\, B\, K\, b^8 + 46\, A\, B\, b^{10} - 31\, A\, B\, b^8\, c + 34\, A\, K^2\, b^4 - 9\, A\, K\, b^6 - 68\, A\, K\, b^4\, c \\
&+ 8\, A\, b^8 + 9\, A\, b^6\, c + 34\, A\, b^4\, c^2 + 20\, B^3\, b^{10} - 30\, B^2\, K\, b^6 + 40\, B^2\, b^8 + 30\, B^2\, b^6\, c \\
&+ 60\, B\, K^2\, b^2 - 64\, B\, K\, b^4 - 120\, B\, K\, b^2\, c + 18\, B\, b^6 + 64\, B\, b^4\, c + 60\, B\, b^2\, c^2 \\
&+ 60\, K^2 - 20\, K\, b^2- 120\, K\, c + 20\, b^2\, c + 60\, c^2,
\end{split}
\end{equation*}
\begin{equation*}
\begin{split}
g_8(b)=&16\, A^3\, b^{16} + 60\, A^2\, B\, b^{14} + 34\, A^2\, K\, b^{10} + 36\, A^2\, b^{12} - 34\, A^2\, b^{10}\, c + 81\, A\, B^2\, b^{12} \\
&- 26\, A\, B\, K\, b^8 + 90\, A\, B\, b^{10} + 26\, A\, B\, b^8\, c + 31\, A\, K^2\, b^4 - 27\, A\, K\, b^6 - 62\, A\, K\, b^4\, c\\
& + 16\, A\, b^8+ 27\, A\, b^6\, c + 31\, A\, b^4\, c^2 + 36\, B^3\, b^{10} - 54\, B^2\, K\, b^6 + 55\, B^2\, b^8 + 54\, B^2\, b^6\, c \\
&+ 50\, B\, K^2\, b^2 - 66\, B\, K\, b^4 - 100\, B\, K\, b^2\, c + 18\, B\, b^6 + 66\, B\, b^4\, c + 50\, B\, b^2\, c^2 + 45\, K^2 \\
&- 10\, K\, b^2 90\, K\, c + 10\, b^2\, c + 45\, c^2,
\end{split}
\end{equation*}
\begin{equation*}
\begin{split}
g_{10}(b)=&24\, A^3\, b^{16} + 73\, A^2\, B\, b^{14} - 12\, A^2\, K\, b^{10} + 36\, A^2\, b^{12} + 12\, A^2\, b^{10}\, c + 78\, A\, B^2\, b^{12} \\
&- 59\, A\, B\, K\, b^8 + 63\, A\, B\, b^{10} + 59\, A\, B\, b^8\, c + 15\, A\, K^2\, b^4 - 27\, A\, K\, b^6 - 30\, A\, K\, b^4\, c \\
&+ 8\, A\, b^8+ 27\, A\, b^6\, c + 15\, A\, b^4\, c^2 + 28\, B^3\, b^{10} - 42\, B^2\, K\, b^6 + 30\, B^2\, b^8 + 42\, B^2\, b^6\, c \\
&+ 22\, B\, K^2\, b^2 - 36\, B\, K\, b^4 - 44\, B\, K\, b^2\, c + 6\, B\, b^6 + 36\, B\, b^4\, c + 22\, B\, b^2\, c^2 + 18\, K^2 \\
&- 2\, K\, b^2- 36\, K\, c + 2\, b^2\, c + 18\, c^2,
\end{split}
\end{equation*}
\begin{equation*}
\begin{split}
g_{12}(b)=&12\, A^3\, b^{16} + 31\, A^2\, B\, b^{14} - 15\, A^2\, K\, b^{10} + 6\, A^2\, b^{12} + 15\, A^2\, b^{10}\, c + 27\, A\, B^2\, b^{12} \\
&- 26\, A\, B\, K\, b^8 + 11\, A\, B\, b^{10} + 26\, A\, B\, b^8\, c + 3\, A\, K^2\, b^4 - 9\, A\, K\, b^6 - 6\, A\, K\, b^4\, c \\
& + 9\, A\, b^6\, c + 3\, A\, b^4\, c^2 + 8\, B^3\, b^{10} - 12\, B^2\, K\, b^6 + 5\, B^2\, b^8 + 12\, B^2\, b^6\, c + 4\, B\, K^2\, b^2\\
& - 8\, B\, K\, b^4 - 8\, B\, K\, b^2\, c + 8\, B\, b^4\, c + 4\, B\, b^2\, c^2 + 3\, K^2 - 6\, K\, c + 3\, c^2.
\end{split}
\end{equation*}



\section*{Acknowledgements}
This work is supported by Yunnan Fundamental Research Projects (Grant NO. 202101BE070001-050).

\section*{Conflict of interest statement }
This work does not have any conflicts of interest.


\end{document}